\newtheorem{thm}{Theorem}[section]
\newtheorem{lem}[thm]{Lemma}
\numberwithin{equation}{section}
\begin{document}
\begin{frontmatter}
\title{Global well-posedness for dissipative IPM with data close to a class of special solutions}
\author[author1]{Liangchen Zou}
\address[author1]{School of Mathematical Sciences, University of Science and Technology of China, Hefei, Anhui, 230026, PR China, zlc0601@mail.ustc.edu.cn}

\date{\today}
\begin{abstract}
In this paper, we consider the 2-D dissipative incompressible porous media (IPM) equation in both supercritical and subcritical cases. The dissipative IPM equation admits a class of special solutions of the form $\rho(x_1,x_2,t)=f(x_2,t)$, which decay in the mode of the 1-D fractional heat equation. Our main result is the global well-posedness for the dissipative IPM equation with initial data close to this class of special solutions provided that $f$ satisfies certain regularity assumptions.
\end{abstract}
\begin{keyword}
Global well-posedness\sep incompressible porous medium equation\sep special solutions
\end{keyword}
\end{frontmatter}
\section{Introduction}
Let $0<\alpha<2$. We consider the dissipative incompressible porous media (IPM) equation
\begin{equation}\begin{aligned}
\partial_t\rho+u\cdot\nabla\rho=-\Lambda^\alpha\rho,
\end{aligned}\label{1.1}\end{equation}
with a velocity field $u$ satisfying the momentum equation given by Darcy's law:
\begin{equation}\frac{\mu}{\kappa}u=-\nabla p -g(0,\rho),\label{1.2}\end{equation}
where $x=(x_1,x_2)\in\mathbb{R}^2$, $t>0$, $\Lambda=\sqrt{-\Delta}$, $u=(u_1,u_2)$ is the incompressible velocity ($\nabla\cdot u=0$), $p$ is the pressure, $\rho$ is the density, $\mu$ is the dynamic viscosity, $\kappa$ is the permeability, and $g$ is the gravity acceleration. For more details and physical background, we refer to \cite{nield2006convection}. For convenience, we set $\mu=\kappa=g=1$, so that we solve from (\ref{1.2}) that
$$u_1=-R_1R_2\rho,\;\quad u_2=R_1^2\rho,$$
where $R_i:=\partial_i\Lambda^{-1}$, $i\in\left\{1,2\right\}$ is the $i$th Riesz transform on $\mathbb{R}^2$.\\
\indent There have been many results on the well-posedness of IPM equations in different settings, including the whole space case \cite{cordoba2007analytical}, in a strip domain \cite{castro2019global}, patch-type solution \cite{cordoba2007contour}, patch-type solution for singular IPM \cite{friedlander2012singular}. In addition, other results include the lack of uniqueness for weak solutions \cite{cordoba2011lack, szekelyhidi2012relaxation}, instability \cite{castro2013breakdown, kiselev2023small}, and long-time behavior \cite{castro2019global, cordoba2007contour, elgindi2017asymptotic}. For the dissipative IPM, local existence for arbitrary data and global existence for small data are obtained in the supercritical case \cite{xue2009well} and critical case \cite{yuan2009global}.\\
\indent As mentioned by Elgindi \cite{elgindi2017asymptotic}, the IPM equation admits a class of steady solutions $\rho=f(x_2)$, where $f$ can be any $C^1$ function. As to the dissipative IPM equation, $\rho=f(x_2)$ is no longer a steady solution since $\Lambda^{\alpha}\rho\neq0$. However, if we let $f$ be the solution of the 1-D fractional heat equation $\partial_tf=-\Lambda^{\alpha}f$, then $\rho(x_1,x_2,t):=f(x_2,t)$ is a solution of (\ref{1.1}) with $u=0$. In contrast, Bulut and Dong \cite{bulut2024global} observe that any radially symmetric function is a steady solution of the surface quasi-geostrophic (SQG) equation, and any radially symmetric solution to the 2-D fractional heat equation is also a solution to the dissipative SQG equation.\\
\indent The authors of \cite{bulut2024global} proved global well-posedness for the SQG equation with initial data which is a small perturbation of a radial function. Motivated by this, in this paper we consider the global well-posedness for the dissipative IPM equation with initial data which is a perturbation of a function dependent only on $x_2$. The essential difference between the SQG equation and the IPM equation is that radial solutions of the SQG equation can lie in a certain Sobolev space $H^s(\mathbb{R}^2)$, while a function of only $x_2$ can never lie in any Sobolev space on $\mathbb{R}^2$. Moreover, the radial solution to the dissipative SQG equation should satisfy the 2-D fractional heat equation, while to make $\rho(x_1,x_2,t)=f(x_2,t)$ a solution of the dissipative IPM equation, one needs $f$ to satisfy the 1-D fractional heat equation. The difference in dimension results in different decay rates of solutions to heat equations and thus different controls over the nonlinear terms.\\
\indent Now we state the main theorem. To avoid ambiguity, let $\tilde{\Lambda}:=\sqrt{-\partial_x^2}$ be the operator acting on functions defined in the 1-D space $\mathbb{R}$ with domain $\dot{H}^1(\mathbb{R})$. 
\begin{thm}
Suppose one of the following holds:\\
(\romannumeral1) $0<\alpha<1$, $s=2-\alpha$, $\nabla f\in L^{\infty}(\mathbb{R)}$, $\tilde{\Lambda}^{1+s-\frac{\alpha}{2}} f\in L^{p}(\mathbb{R})$ with $p\in[2,\infty)$, and $\nabla f\in L^q(\mathbb{R})$ for a given $q\in\left[1,\frac{1}{\alpha}\right)$.\\
(\romannumeral2) $1\leq\alpha<2$, $s\geq1$, $\nabla f\in L^{\infty}(\mathbb{R)}$, $\tilde{\Lambda}^{1+s-\frac{\alpha}{2}} f\in L^{p}(\mathbb{R})$ with $p\in[2,\infty)$, and $f\in L^q(\mathbb{R})$ for a given $q\in\left[1.\frac{1}{\alpha-1}\right)\cap[1,p]$.\\
Then there exists $\varepsilon>0$ such that if $g\in H^s(\mathbb{R}^2)$ satisfies $\|g\|_{H^s}\leq\varepsilon,$ then the initial value problem
\begin{equation}\left\{\begin{aligned} 
&\partial_t\rho+u\cdot\nabla\rho=-\Lambda^\alpha \rho\\
&u=(u_1,u_2)=(-R_1R_2\rho, R_1^2\rho)\\
&\rho(x,0)=f(x_2)+g(x_1,x_2)
\end{aligned}\right.\label{1.3}\end{equation}
has a unique global solution in $C\left([0,\infty);H^{s}(\mathbb{R}^2)\right)$.
\label{thm1.1}\end{thm}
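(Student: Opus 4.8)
To prove Theorem~\ref{thm1.1} I would argue by a perturbative energy method around the special solution, combined with a continuation/bootstrap argument. Let $f=f(x_2,t)$ solve the $1$-D equation $\partial_t f=-\tilde\Lambda^\alpha f$, $f(\cdot,0)=f(x_2)$. Since $f$ depends only on $x_2$, $\widehat f$ is supported on $\{\xi_1=0\}$, so $R_1 f\equiv0$ and $\Lambda^\alpha f=\tilde\Lambda^\alpha f$; thus $f$ itself solves the full dissipative IPM equation with zero velocity, and the velocity $u=(-R_1R_2\rho,R_1^2\rho)=(-R_1R_2 g,R_1^2 g)=:u[g]$ depends only on $g:=\rho-f$. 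Subtracting, $g$ solves
\[
\partial_t g+\Lambda^\alpha g=-\,u[g]\cdot\nabla g-(R_1^2 g)\,\partial_2 f,\qquad g(\cdot,0)=g,
\]
since $u[g]\cdot\nabla f=u_2[g]\,\partial_2 f=(R_1^2 g)\,\partial_2 f$. A standard mollification/energy argument---using $\partial_2 f\in L^\infty$ and the smoothing of the $1$-D fractional heat flow to handle the $x_2$-dependent coefficient $\partial_2 f$ in the lower-order term---gives a unique maximal solution $g\in C([0,T_*);H^s)$ with the blow-up criterion $T_*<\infty\Rightarrow\limsup_{t\to T_*}\|g(t)\|_{H^s}=\infty$. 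It therefore suffices to prove an a priori bound $\sup_{t\ge0}\|g(t)\|_{H^s}\le C(f)\varepsilon$; for $\varepsilon$ small this closes a continuity argument improving the bootstrap assumption $\|g(t)\|_{H^s}\le\delta$ to $\le\delta/2$.

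\textbf{Energy estimates; transport term.} Since $\nabla\cdot u=0$, the $L^2$ estimate reads $\tfrac12\tfrac{d}{dt}\|g\|_{L^2}^2+\|\Lambda^{\alpha/2}g\|_{L^2}^2=-\langle(R_1^2 g)\partial_2 f,g\rangle\le\|\nabla f(t)\|_{L^\infty}\|g\|_{L^2}^2$, so Gr\"onwall together with $\int_0^\infty\|\nabla f(t)\|_{L^\infty}\,dt<\infty$ (established below) gives $\|g(t)\|_{L^2}^2\le K(f)\|g_0\|_{L^2}^2$ uniformly in $t$. Applying $\Lambda^s$, pairing with $\Lambda^s g$, and using $\nabla\cdot u=0$,
\[
\tfrac12\tfrac{d}{dt}\|\Lambda^s g\|_{L^2}^2+\|\Lambda^{s+\alpha/2}g\|_{L^2}^2=-\big\langle[\Lambda^s,u\cdot\nabla]g,\Lambda^s g\big\rangle-\big\langle\Lambda^s\!\big((R_1^2 g)\,\partial_2 f\big),\Lambda^s g\big\rangle .
\]
The transport commutator is handled by the standard Kato--Ponce/Kenig--Ponce--Vega paraproduct estimates for fractionally dissipated active scalars with zeroth-order (Riesz) drift---at the scaling-critical level $s=2-\alpha$ in case (\romannumeral1), subcritically in case (\romannumeral2)---yielding a bound $\lesssim\|g\|_{H^s}\big(\|\Lambda^{s+\alpha/2}g\|_{L^2}^2+\|g\|_{L^2}^2\big)$ (simply $\lesssim\|g\|_{H^s}^3$ when $s>2$), absorbed into the dissipation once $\|g\|_{H^s}\le\delta$. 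I expect this step to be essentially routine.

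\textbf{The coupling term --- the main obstacle.} The heart of the matter is $\langle\Lambda^s((R_1^2 g)\,\partial_2 f),\Lambda^s g\rangle$: because $\partial_2 f$ depends only on $x_2$ it lies in no Sobolev space over $\mathbb{R}^2$ (the essential difference with SQG noted in the Introduction), so a naive fractional Leibniz rule fails and the $2$-D energy method must be run in mixed norms. I would use a Bony decomposition $\Lambda^s((R_1^2 g)\,\partial_2 f)=\Lambda^s T_{\partial_2 f}(R_1^2 g)+\Lambda^s T_{R_1^2 g}(\partial_2 f)+\Lambda^s R(R_1^2 g,\partial_2 f)$, project onto Littlewood--Paley blocks, and use Bernstein in the $x_2$-variable to trade $x_2$-frequency for $L^p$--$L^\infty$ factors: the first paraproduct is controlled by $\|\partial_2 f\|_{L^\infty}=\|\nabla f(t)\|_{L^\infty}$, the high-$x_2$-frequency content of $f$ in the other two pieces is exactly absorbed by $\|\tilde\Lambda^{1+s-\alpha/2}f(t)\|_{L^p}$ (the precise derivative count making the dyadic sums converge after one factor $\|\Lambda^{s+\alpha/2}g\|_{L^2}$ of dissipation is spent), and the low-$x_2$-frequency content by $\|\nabla f(t)\|_{L^q}$ (case (\romannumeral1)) or $\|f(t)\|_{L^q}$ (case (\romannumeral2)). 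After Young's inequality this contributes to the $\dot H^s$ estimate at worst $\tfrac12\|\Lambda^{s+\alpha/2}g\|_{L^2}^2+a(t)\|g\|_{H^s}^2+b(t)\|g\|_{L^2}^2$ with $a,b$ built from those $f$-norms. The remaining crucial ingredient is decay along the $1$-D fractional heat flow: with $f(\cdot,t)=P_t^\alpha\ast f$, $\|P_t^\alpha\|_{L^1}=1$, $\|\partial^j P_t^\alpha\|_{L^r}\simeq t^{-(j+1-1/r)/\alpha}$, the hypotheses give $\|\nabla f(\cdot,t)\|_{L^\infty}\lesssim\min\!\big(\|\nabla f\|_{L^\infty},\,t^{-\beta}\big)$ with $\beta=\tfrac1{q\alpha}>1$ in case (\romannumeral1) and $\beta=\tfrac{1+1/q}{\alpha}>1$ in case (\romannumeral2), and (interpolating the $L^p$-hypothesis on $\tilde\Lambda^{1+s-\alpha/2}f$ against the $L^q$-hypothesis, plus parabolic smoothing on high frequencies) analogous time-integrable bounds for $b(t)$. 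The constraints $q<1/\alpha$ (case (\romannumeral1)) and $q<1/(\alpha-1)$, $q\le p$ (case (\romannumeral2)) are precisely what force $\int_0^\infty(a+b)\,dt<\infty$, which is the origin of the supercritical/subcritical dichotomy in the statement. This step---the mixed-norm paraproduct bookkeeping together with the integrability-in-time of the heat-flow decay---is where I expect all the difficulty to lie.

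\textbf{Conclusion.} On any interval where $\|g\|_{H^s}\le\delta$ the above gives $\tfrac{d}{dt}\|g(t)\|_{H^s}^2\le a(t)\|g(t)\|_{H^s}^2+b(t)\|g(t)\|_{L^2}^2$; combined with $\|g(t)\|_{L^2}^2\le K(f)\varepsilon^2$ and $\int_0^\infty(a+b)\,dt=:M(f)<\infty$, Gr\"onwall yields $\sup_{t\ge0}\|g(t)\|_{H^s}^2\le C(f)^2\varepsilon^2$. Choosing $\varepsilon$ so small that $C(f)\varepsilon\le\delta/2$ closes the bootstrap, and the blow-up criterion then upgrades the local solution to a global one, $g\in C([0,\infty);H^s(\mathbb{R}^2))$, which is the asserted global solution $\rho=f+g$. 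Uniqueness follows from an $L^2$ energy estimate for the difference of two solutions, the lower-order terms being controlled via $\nabla f,\partial_2 f\in L^\infty$ and Gr\"onwall. The single term $(R_1^2 g)\,\partial_2 f$ is where every structural hypothesis on $f$ is consumed, and its $H^s$-estimate is the main obstacle.
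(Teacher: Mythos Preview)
Your overall architecture---decompose $\rho=\rho_0+\rho_1$, derive the perturbation equation, energy estimate, commutator bound on the transport term, and a continuity/bootstrap closure---matches the paper's exactly, and your treatment of case (\romannumeral2) via a single Gr\"onwall with time-integrable coefficients is precisely what the paper does. Two genuine differences are worth noting.

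\emph{Technique for the coupling term.} Instead of Bony paraproducts with $x_2$-Bernstein, the paper handles $\langle\Lambda^s\rho_1,\Lambda^s(u_2\partial_2\rho_0)\rangle$ by an elementary Fourier splitting: since $(\Lambda_1^\sigma+\Lambda_2^\sigma)^{-1}\Lambda^\sigma$ is bounded and invertible on $L^2$ (Lemma~\ref{lem2.1}), one replaces $\Lambda^{s-\alpha/2}$ by $\Lambda_1^{s-\alpha/2}+\Lambda_2^{s-\alpha/2}$ acting on $u_2\partial_2\rho_0$. The $\Lambda_1$-piece commutes exactly past $\partial_2\rho_0$ (a function of $x_2$ only), and the $\Lambda_2$-piece is estimated by the one-dimensional Kato--Ponce/Sobolev embedding in $x_2$. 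This gives the clean bounds (\ref{2.7})--(\ref{2.8}) without any Littlewood--Paley bookkeeping; your paraproduct route should yield the same endpoint inequalities but with more machinery.

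\emph{Structure in case (\romannumeral1).} Here the paper does \emph{not} run a single Gr\"onwall. Using (\ref{2.3}) and (\ref{2.7}) with the specific exponent $L^{1/\alpha}_{x_2}$, the coupling term takes the form $(\|\nabla\rho_0\|_{L^{1/\alpha}}+\|\nabla\tilde\Lambda^s\rho_0\|_{L^{1/\alpha}})\cdot(\text{dissipation})$, so it is \emph{absorbed} into $\|\Lambda^{\alpha/2}\rho_1\|_{L^2}^2+\|\Lambda^{s+\alpha/2}\rho_1\|_{L^2}^2$ once those $\rho_0$-norms are small (inequality (\ref{3.2})). Since $q<1/\alpha$, heat smoothing $L^q\to L^{1/\alpha}$ always applies and drives these norms to zero; one waits until a fixed time $t_1$ and uses the finite-time propagation (\ref{2.11}) on $[0,t_1]$. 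Your proposal instead tries to make the $L^p$-based coefficient $\|\tilde\Lambda^{1+s-\alpha/2}\rho_0(t)\|_{L^p}^2$ time-integrable directly. This is delicate: the hypothesis $\tilde\Lambda^{1+s-\alpha/2}f\in L^p$ gives no decay by itself, and obtaining decay from $\nabla f\in L^q$ via $L^q\to L^p$ smoothing requires $q\le p$, which is \emph{not} assumed in case (\romannumeral1) (and can fail for $\alpha<1/2$). The paper's choice of $L^{1/\alpha}$ rather than $L^p$ is exactly what makes the large-time step work uniformly in the stated parameter range; your sketch as written has a gap there, though it is repairable by switching to the $L^{1/\alpha}$ estimate for large time.
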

\noindent\textbf{Acknowledgements.} The author is grateful for the helpful discussions with Professor \'{A}ngel Castro (Instituto de Ciencias Matemáticas, ICMAT-CSIC-UAM-UCM-UC3M). The author acknowledges support from the China Scholarship Council Program (Project ID 202306340142).
\section{Decomposition and a priori estimate}
Let $K_\alpha(t):=\exp\left\{-t\tilde{\Lambda}^\alpha \right\}$ be the semigroup generated by the operator $-\tilde{\Lambda}^\alpha$ on $\mathbb{R}$ for $t>0$ so that
$\rho_0(x_2,t):=[K_\gamma(t)f](x_2)$, $(x_2,t)\in\mathbb{R}\times\mathbb{R}^+$ is the solution to 
\begin{equation}\left\{\begin{aligned}
&\partial_t\rho_0=-\tilde{\Lambda}^\alpha\rho_0\\
&\rho_0(x_2,0)=f(x_2).
\end{aligned}\right.\label{2.1}\end{equation}
Suppose that $\rho$ is a solution to (\ref{1.3}) with the decomposition $\rho=\rho_1+\rho_0$, then $\rho_1$ must solve the equation
\begin{equation}
\partial_t\rho_1+u\cdot\nabla\rho_1+u_2\partial_{x_2}\rho_0=-\Lambda^{\alpha}\rho_1
\label{2.2}\end{equation}
with $\rho_1(x,0)=g(x)$. In addition, since $\rho_0$ is a function of only $x_2$ and independent of $x_1$, the velocity satisfies
$$u_1=-R_1R_2\rho_1,\quad u_2=R_1^2\rho_1,$$
and consequently $\|u\|_{\dot{H}^\gamma}=\|R_1\rho_1\|_{\dot{H}^\gamma}\leq\|\rho_1\|_{\dot{H}^\gamma}$, $\gamma\geq0$.
Let $\Lambda_1:=\sqrt{-\partial_{x_1}^2}$, and $\Lambda_2:=\sqrt{-\partial_{x_2}^2}$ with domain $\dot{H}^1(\mathbb{R}^2)$. We now show some properties of the operators $\Lambda_i$ , $i\in\left\{1,2\right\}$ which we will use later.
\begin{lem} Suppose $s\geq0$, then the following properties hold:\\
(\romannumeral1) $(\Lambda_1^s+\Lambda_2^s)^{-1}\Lambda^s$ is bounded and invertible on $L^2(\mathbb{R}^2)$.\\
(\romannumeral2) Suppose $f\in \dot{H}^s(\mathbb{R}^2)$, then for almost every $x_1\in\mathbb{R}$
$$(\Lambda_2^sf)(x_1,\cdot)=\tilde{\Lambda}(f(x_1,\cdot)).$$
(\romannumeral3) Let $f\in\dot{H}^s(\mathbb{R}^2)$, $g\in L^1_{loc}(\mathbb{R)}$ be regular enough and $h(x_1,x_2):=f(x_1,x_2)g(x_2)$. Then
$$\Lambda_1^sh=g\Lambda_1^sf.$$
\label{lem2.1}\end{lem}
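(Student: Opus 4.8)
The plan is to prove all three statements by passing to the Fourier transform on $\mathbb{R}^2$ and, where needed, the partial Fourier transform in one variable only; each claim then reduces to an elementary multiplier computation.

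For (\romannumeral1), recall that $\Lambda$, $\Lambda_1$, $\Lambda_2$ have Fourier symbols $|\xi| = \sqrt{\xi_1^2 + \xi_2^2}$, $|\xi_1|$, $|\xi_2|$ respectively. Thus $(\Lambda_1^s + \Lambda_2^s)^{-1}\Lambda^s$ is the Fourier multiplier with symbol $m(\xi) = \frac{(\xi_1^2 + \xi_2^2)^{s/2}}{|\xi_1|^s + |\xi_2|^s}$, which is homogeneous of degree $0$ and hence it suffices to bound it on the unit circle. Writing $|\xi|=1$ and parametrizing $\xi_1 = \cos\theta$, $\xi_2 = \sin\theta$, one checks by elementary inequalities (e.g. $|\xi_1|^s + |\xi_2|^s \geq 2^{1-s/2}(\xi_1^2+\xi_2^2)^{s/2}$ for $0 < s \le 2$, and a comparison of $\ell^2$ and $\ell^s$ norms in general) that $c \le m(\xi) \le C$ for positive constants $c, C$ depending only on $s$. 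This immediately gives that both $m$ and $1/m$ are bounded multipliers on $L^2$ by Plancherel, which is exactly boundedness and invertibility of the operator. The only minor point to handle is $s = 0$, which is trivial since the operator is then $\tfrac12 \mathrm{Id}$.

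For (\romannumeral2), I would use the partial Fourier transform $\mathcal{F}_1$ in the $x_1$ variable only, writing $\hat f(\xi_1, x_2)$. The operator $\Lambda_2^s$ acts as $\tilde\Lambda^s$ in the $x_2$ slice for each fixed frequency $\xi_1$: indeed, taking the full Fourier transform and then inverting only in $\xi_1$, the symbol $|\xi_2|^s$ does not involve $\xi_1$, so $\mathcal{F}_1(\Lambda_2^s f)(\xi_1, x_2) = \tilde\Lambda^s_{x_2}(\hat f(\xi_1, \cdot))(x_2)$; undoing $\mathcal{F}_1$ and invoking Fubini to identify, for a.e.\ $x_1$, the $x_2$-slice of $\Lambda_2^s f$ with $\tilde\Lambda^s$ applied to the $x_2$-slice of $f$. (Here I read the statement as $\Lambda_2^s f(x_1,\cdot) = \tilde\Lambda^s(f(x_1,\cdot))$; the $s$ on the right is implicit in the excerpt.) One must check $f(x_1,\cdot) \in \dot H^s(\mathbb{R})$ for a.e.\ $x_1$, which follows from $f \in \dot H^s(\mathbb{R}^2)$ and Fubini applied to $\int\!\!\int |\xi_2|^{2s}|\hat f(\xi_1,\xi_2)|^2\,d\xi_1\,d\xi_2 < \infty$.

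For (\romannumeral3), the point is that $\Lambda_1^s$ differentiates only in $x_1$, and $g = g(x_2)$ is constant in that variable, so it passes through the operator. Rigorously, I would again use $\mathcal{F}_1$: $\mathcal{F}_1(h)(\xi_1, x_2) = \mathcal{F}_1(f)(\xi_1, x_2)\, g(x_2)$ since multiplication by a function of $x_2$ commutes with $\mathcal{F}_1$; then $\mathcal{F}_1(\Lambda_1^s h)(\xi_1, x_2) = |\xi_1|^s \mathcal{F}_1(f)(\xi_1, x_2)\, g(x_2) = \mathcal{F}_1(g\,\Lambda_1^s f)(\xi_1, x_2)$, and inverting $\mathcal{F}_1$ gives the claim. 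The phrase ``regular enough'' in the hypothesis is precisely what is needed to ensure all these manipulations (and the membership of $h$ in the relevant space so that $\Lambda_1^s h$ makes sense) are justified — e.g.\ $g$ bounded suffices for the product to stay in $\dot H^s$ in the $x_1$-direction.

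I do not anticipate a serious obstacle here; the whole lemma is bookkeeping with Fourier multipliers. The only place demanding a little care is the quantitative two-sided bound on the symbol in (\romannumeral1), where one should be slightly careful with the range of $s$ (the comparison constants degenerate as $s \to \infty$ but remain finite for each fixed $s$), and the a.e.\ statements in (\romannumeral2)–(\romannumeral3), which need Fubini and a density/approximation argument to move from Schwartz functions to general elements of $\dot H^s$.
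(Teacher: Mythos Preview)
Your proposal is correct and follows essentially the same approach as the paper: all three parts are reduced to elementary Fourier multiplier computations, with (\romannumeral2) and (\romannumeral3) handled via the partial Fourier transform in one variable (the paper writes this as $\mathcal{F}_1,\mathcal{F}_2$ and commutes them, which amounts to the same thing). You supply slightly more detail than the paper on the two-sided symbol bound in (\romannumeral1) and on the a.e.\ justification via Fubini in (\romannumeral2), and you correctly read the right-hand side of (\romannumeral2) as $\tilde\Lambda^s$ rather than $\tilde\Lambda$.
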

\begin{proof}
The assertion (\romannumeral1) holds since the operator $(\Lambda^s_1+\Lambda^s_2)^{-1}\Lambda^s$ is the multiplier with the symbol $\left(|\xi_1|^s+|\xi_2|^s\right)^{-1}|\xi|^s$, which is bounded from above and below by constants. As a consequence, for any $f\in\dot{H}^s(\mathbb{R}^2)$, $\|f\|_{\dot{H}^s}\simeq\|\Lambda_1^sf\|_{L^2}+\|\Lambda_2^sf\|_{L^2}$. To prove (\romannumeral2), we denote the Fourier transform in the variable $x_i$ by $\mathcal{F}_{i}$, $i\in\left\{1,2\right\}.$ Since $f\in \dot{H}^s(\mathbb{R})$, $f(x_1,\cdot)\in \dot{H}^s(\mathbb{R})$ for almost every $x_1\in\mathbb{R}$. Hence
$$\begin{aligned}
(\Lambda_2^sf)(x_1,x_2)=&\mathcal{F}_1^{-1}\mathcal{F}_2^{-1}\left(|\xi_2|^s\mathcal{F}_1\mathcal{F}_2f\right)(x_1,x_2)\\
=&\mathcal{F}_2^{-1}\mathcal{F}_1^{-1}\left(\mathcal{F}_1\left(|\xi_2|^s\mathcal{F}_2f\right)\right)(x_1,x_2)\\
=&\mathcal{F}_2^{-1}\left(|\xi_2|^s\mathcal{F}_2f\right)(x_1,x_2)\\
=&\tilde{\Lambda}(f(x_1,\cdot))(x_2).
\end{aligned}$$
For (\romannumeral3), by noting that $\mathcal{F}_1h=g\mathcal{F}_1f$, it follows
$$\begin{aligned}
\Lambda_1^sh=&\mathcal{F}_1^{-1}\mathcal{F}_2^{-1}\left(|\xi_1|^s\mathcal{F}_1\mathcal{F}_2h\right)\\
=&\mathcal{F}_1^{-1}\mathcal{F}_2^{-1}\left(\mathcal{F}_2(|\xi_1|^s\mathcal{F}_1h)\right)\\
=&\mathcal{F}_1^{-1}\left(|\xi_1|^sg\mathcal{F}_1f\right)\\
=&g\Lambda_1^sf.
\end{aligned}$$
\end{proof}
We now consider a priori bounds on solutions to (\ref{2.2}).
\begin{lem}
Let $\rho_0$, $\rho_1$ satisfy (\ref{2.1})(\ref{2.2}). Then for all $t>0$, the following estimates hold.\\
(\romannumeral1) If $0\leq\alpha<1$, then
\begin{equation}\begin{aligned}
\left|\int_{\mathbb{R}^2}\rho_1u\cdot\nabla\rho_0dx\right|\lesssim\|\Lambda^\frac{\alpha}{2}\rho_1\|_{L^2}^2\|\nabla\rho_0\|_{L^\frac{1}{\alpha}_{x_2}}.
\end{aligned}\label{2.3}\end{equation}
\noindent For any $0\leq\alpha\leq2$,
\begin{equation}\begin{aligned}
\left|\int_{\mathbb{R}^2}\rho_1u\cdot\nabla\rho_0dx\right|\leq\|\rho_1\|_{L^2}^2\|\nabla\rho_0\|_{L^\infty_{x_2}}
\end{aligned}\label{2.4}\end{equation}
\label{lem2.2}\end{lem}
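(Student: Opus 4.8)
The plan is to estimate the nonlinear term $\int_{\mathbb{R}^2}\rho_1 u\cdot\nabla\rho_0\,dx$ by exploiting the fact that $\nabla\rho_0 = (0,\partial_{x_2}\rho_0)$ depends only on $x_2$, so that the integral is really $\int_{\mathbb{R}^2}\rho_1 u_2\,\partial_{x_2}\rho_0\,dx$. Since $\partial_{x_2}\rho_0(x_2,t)$ is a function of $x_2$ alone, I would treat it as a multiplier in the $x_2$-slices and use H\"older's inequality in the $x_2$ variable, keeping $L^2$ in $x_1$. For the crude bound \eqref{2.4}, this is immediate: $|\int \rho_1 u_2\partial_{x_2}\rho_0\,dx|\le \|\partial_{x_2}\rho_0\|_{L^\infty_{x_2}}\|\rho_1\|_{L^2}\|u_2\|_{L^2}$, and then $\|u_2\|_{L^2}=\|R_1^2\rho_1\|_{L^2}\le\|\rho_1\|_{L^2}$ since $R_1$ has operator norm one on $L^2$; this gives \eqref{2.4} for all $0\le\alpha\le2$.

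For the gain \eqref{2.3} when $0\le\alpha<1$, the point is to split the $L^\infty_{x_2}$ norm of $\nabla\rho_0$ into $L^{1/\alpha}_{x_2}$ against two factors carrying $\alpha/2$ derivatives each. I would apply H\"older in $x_2$ with exponents $\tfrac1\alpha, \tfrac{2}{1-\alpha}, \tfrac{2}{1-\alpha}$ (note $\alpha + \tfrac{1-\alpha}{2}+\tfrac{1-\alpha}{2}=1$) to bound the integrand, at fixed $x_1$, by $\|\partial_{x_2}\rho_0\|_{L^{1/\alpha}_{x_2}}\,\|\rho_1(x_1,\cdot)\|_{L^{2/(1-\alpha)}_{x_2}}\,\|u_2(x_1,\cdot)\|_{L^{2/(1-\alpha)}_{x_2}}$. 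The 1-D Gagliardo--Nirenberg--Sobolev embedding $\dot H^{\alpha/2}(\mathbb{R})\hookrightarrow L^{2/(1-\alpha)}(\mathbb{R})$ (valid exactly because $\alpha<1$) converts each of the last two factors into $\|\tilde\Lambda^{\alpha/2}\rho_1(x_1,\cdot)\|_{L^2_{x_2}}$ and $\|\tilde\Lambda^{\alpha/2}u_2(x_1,\cdot)\|_{L^2_{x_2}}$. By Lemma~\ref{lem2.1}(\romannumeral2) these slice norms are the $x_2$-slices of $\Lambda_2^{\alpha/2}\rho_1$ and $\Lambda_2^{\alpha/2}u_2$. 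Integrating the product over $x_1$ with Cauchy--Schwarz yields $\|\Lambda_2^{\alpha/2}\rho_1\|_{L^2}\|\Lambda_2^{\alpha/2}u_2\|_{L^2}$. Finally I use $\|\Lambda_2^{\alpha/2}u_2\|_{L^2}=\|\Lambda_2^{\alpha/2}R_1^2\rho_1\|_{L^2}\le\|\Lambda_2^{\alpha/2}\rho_1\|_{L^2}$ (Fourier multiplier comparison, since $|\xi_1|^2/|\xi|^2\le1$ commutes with $|\xi_2|^{\alpha/2}$) and then $\|\Lambda_2^{\alpha/2}\rho_1\|_{L^2}\lesssim\|\Lambda^{\alpha/2}\rho_1\|_{L^2}$ from Lemma~\ref{lem2.1}(\romannumeral1). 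Combining these gives \eqref{2.3}.

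I expect the main obstacle to be the book-keeping around the anisotropic (one-directional) fractional operators: one must be careful that the 1-D Sobolev embedding applied slice-by-slice really does reassemble into the global estimate $\|\tilde\Lambda^{\alpha/2}\rho_1(x_1,\cdot)\|_{L^2_{x_2}}$ in $L^2_{x_1}$, which is where Lemma~\ref{lem2.1}(\romannumeral2) is essential, and that the Riesz-transform bound can be pushed through the anisotropic derivative $\Lambda_2^{\alpha/2}$ rather than the full $\Lambda^{\alpha/2}$. One should also check the borderline case $\alpha=0$ separately (where \eqref{2.3} degenerates to a rescaled version of \eqref{2.4}), and confirm the admissibility of the H\"older exponents and the embedding requires precisely $\alpha<1$, which matches the hypothesis.
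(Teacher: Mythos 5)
Your proposal is correct and follows essentially the same route as the paper: reduce to $u_2\partial_{x_2}\rho_0$, apply H\"older in $x_2$ with exponents $\tfrac{1}{\alpha},\tfrac{2}{1-\alpha},\tfrac{2}{1-\alpha}$, use the 1-D embedding $\dot H^{\alpha/2}(\mathbb{R})\hookrightarrow L^{2/(1-\alpha)}(\mathbb{R})$ slice-by-slice via Lemma~\ref{lem2.1}, then Cauchy--Schwarz in $x_1$ and the Riesz-transform bound $\|\Lambda_2^{\alpha/2}u\|_{L^2}\le\|\Lambda_2^{\alpha/2}\rho_1\|_{L^2}\le\|\Lambda^{\alpha/2}\rho_1\|_{L^2}$, with \eqref{2.4} obtained directly from $\|u\|_{L^2}\le\|\rho_1\|_{L^2}$. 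No gaps worth noting.
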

\noindent(\romannumeral2) If $0<\alpha\leq1$, then for $s=2-\alpha$
\begin{equation}\begin{aligned}
\left|\int_{\mathbb{R}^2}\Lambda^s\rho_1\Lambda^s(u\cdot\nabla\rho_1)dx\right|\lesssim\|\Lambda^s\rho_1\|_{L^2}\|\Lambda^{s+\frac{\alpha}{2}}\rho_1\|_{L^2}^2.    
\end{aligned}\label{2.5}\end{equation}
For $\alpha\in[1,2]$ and $s\geq1$
\begin{equation}\begin{aligned}
\left|\int_{\mathbb{R}^2}\Lambda^s\rho_1\Lambda^s(u\cdot\nabla\rho_1)dx\right|\lesssim\|\Lambda^s\rho_1\|_{L^2}\left(\|\Lambda^\frac{\alpha}{2}\rho_1\|_{L^2}^2+\|\Lambda^{s+\frac{\alpha}{2}}\rho_1\|_{L^2}^2\right).
\end{aligned}\label{2.6}\end{equation}
(\romannumeral3) If $0<\alpha<1$, then for $s=2-\alpha$
\begin{equation}\begin{aligned}
\left|\int_{\mathbb{R}^2}\Lambda^s\rho_1\Lambda^s(u\cdot\nabla\rho_0)dx\right|\lesssim\|\Lambda^{s+\frac{\alpha}{2}}\rho_1\|_{L^2}^2\|\nabla\rho_0\|_{L^\frac{1}{\alpha}_{x_2}}+\|\Lambda^{s+\frac{\alpha}{2}}\rho_1\|_{L^2}\|\Lambda^\frac{\alpha}{2}\rho_1\|_{L^2}\|\nabla\tilde{\Lambda}^s\rho_0\|_{L^\frac{1}{\alpha}_{x_2}}.
\end{aligned}\label{2.7}\end{equation}
For any $0\leq\alpha\leq2$, $s\geq1$ and $2\leq p<\infty$
\begin{equation}\begin{aligned}
\left|\int_{\mathbb{R}^2}\Lambda^s\rho_1\Lambda^s(u\cdot\nabla\rho_0)dx\right|\lesssim\|\Lambda^{s+\frac{\alpha}{2}}\rho_1\|_{L^2}\|u\|_{H^s}\left(\|\nabla\rho_0\|_{L^\infty_{x_2}}+\|\nabla\tilde{\Lambda}^{s-\frac{\alpha}{2}}\rho_0\|_{L^p_{x_2}}\right).
\end{aligned}\label{2.8}\end{equation}
\begin{proof}
First, we note that $\nabla\rho_0$ is a function of only $x_2$. By the Sobolev embedding, it holds for $\alpha\in[0,1)$ that
$$\begin{aligned}
\left|\int_{\mathbb{R}^2}\rho_1u\cdot\nabla\rho_0dx\right|\lesssim&\int_{\mathbb{R}}\|\rho_1\|_{L_{x_2}^{\frac{2}{1-\alpha}}}\|u\|_{L_{x_2}^\frac{2}{1-\alpha}}\|\nabla\rho_0\|_{L_{x_2}^\frac{1}{\alpha}}dx_1\\
\lesssim&\int_{\mathbb{R}}\|\Lambda_2^\frac{\alpha}{2}\rho_1\|_{L^2_{x_2}}\|\Lambda_2^\frac{\alpha}{2}u\|_{L^2_{x_2}}dx_1\|\nabla\rho_0\|_{L^\frac{1}{\alpha}_{x_2}}\\
\lesssim&\|\Lambda_2^\frac{\alpha}{2}\rho_1\|_{L^2}\|\Lambda_2^\frac{\alpha}{2}u\|_{L^2}\|\nabla\rho_0\|_{L^\frac{1}{\alpha}_{x_2}}.
\end{aligned}$$
Hence we obtain (\ref{2.3}) in view of $\|\Lambda_2^\frac{\alpha}{2}u\|_{L^2}\leq\|\Lambda_2^\frac{\alpha}{2}\rho_1\|_{L^2}\leq\|\Lambda^\frac{\alpha}{2}\rho_1\|_{L^2}$. Meanwhile, since $\|u\|_{L^2}\leq\|\rho_1\|_{L^2}$, we obtain (\ref{2.4}) immediately for any $\alpha\in[0,2]$. Suppose now $0<\alpha\leq1$ and $s=2-\alpha$. Since $\nabla\cdot u=0$, we have 
$\int_{\mathbb{R}}\Lambda^s\rho_1(u\cdot\nabla\Lambda^s\rho_1)dx=0$, and thus
$$\begin{aligned}
\left|\int_{\mathbb{R}^2}\Lambda^s\rho_1\Lambda^s(u\cdot\nabla\rho_1)dx\right|=&\left|\int_{\mathbb{R}^2}\Lambda^s\rho_1\left(\Lambda^s(u\cdot\nabla\rho)-u\cdot\nabla\Lambda^s\rho_1\right)dx\right|\\
\lesssim&\|\Lambda^s\rho_1\|_{L^2}\left(\|\Lambda^su\|_{L^{p_1}}\|\nabla\rho\|_{L^{p_2}}+\|\Lambda^{s}\rho_1\|_{L^{p_1}}\|\nabla u\|_{L^{p_2}}\right).
\end{aligned}$$
Here $\frac{1}{p_1}+\frac{1}{p_2}=\frac{1}{2}$, $p_1<\infty$. Choose $p_1=\frac{4}{2-\alpha}$, $p_2=\frac{4}{\alpha}$, so that $s-\frac{2}{p_1}=s+\frac{\alpha}{2}-1$, $1-\frac{2}{p_2}=s+\frac{\alpha}{2}-1$. Then by Sobolev embedding,
$$\|\Lambda^su\|_{L^{p_1}}\|\nabla\rho_1\|_{L^{p_2}}+\|\Lambda^{s}\rho_1\|_{L^{p_1}}\|\nabla u\|_{L^{p_2}}\lesssim\|\Lambda^{s+\frac{\alpha}{2}}u\|_{L^2}\|\Lambda^{s+\frac{\alpha}{2}}\rho_1\|_{L^2}\lesssim\|\Lambda^{s+\frac{\alpha}{2}}\rho_1\|_{L^2}^2,$$ 
and we obtain (\ref{2.5}). On the other hand, for any $\alpha\in[1,2]$ and $s\geq1$, we choose $p_1=p_2=4$, so that
$$\begin{aligned}
&\|\Lambda^su\|_{L^{p_1}}\|\nabla\rho_1\|_{L^{p_2}}+\|\Lambda^{s}\rho_1\|_{L^{p_1}}\|\nabla u\|_{L^{p_2}}\\
\lesssim&
\|\Lambda^{s+\frac{1}{2}}u\|_{L^2}\|\Lambda^\frac{3}{2}\rho_1\|_{L^2}+\|\Lambda^{s+\frac{1}{2}}\rho_1\|_{L^2}\|\Lambda^\frac{3}{2}u\|_{L^2}\\
\lesssim&\|\Lambda^{s+\frac{\alpha}{2}}\rho_1\|_{L^2}^2+\|\Lambda^\frac{\alpha}{2}\rho_1\|_{L^2}^2.
\end{aligned}$$
Hence $$\left|\int_{\mathbb{R}^2}\Lambda^s\rho_1\Lambda^s(u\cdot\nabla\rho_1)dx\right|\lesssim\|\Lambda^s\rho_1\|_{L^2}\left(\|\Lambda^\frac{\alpha}{2}\rho_1\|^2_{L^2}+\|\Lambda^{s+\frac{\alpha}{2}}\rho_1\|_{L^2}^2\right).$$
Next, to prove (\ref{2.7})(\ref{2.8}), we write
$$\begin{aligned}
&\int_{\mathbb{R^2}}\Lambda^s\rho_1\Lambda^s(u\cdot\nabla\rho_0)dx=\int_{\mathbb{R}}(\Lambda_1^{s-\frac{\alpha}{2}}+\Lambda_2^{s-\frac{\alpha}{2}})^{-1}\Lambda^{2s}\rho_1(\Lambda_1^{s-\frac{\alpha}{2}}+\Lambda_2^{s-\frac{\alpha}{2}})(u\cdot\nabla\rho_0)dx.
\end{aligned}$$
By Lemma \ref{lem2.1}, $\|(\Lambda_1^{s-\frac{\alpha}{2}}+\Lambda_2^{s-\frac{\alpha}{2}})^{-1}\Lambda^{2s}\rho_1\|_{L^2}\lesssim\|\Lambda^{s+\frac{\alpha}{2}}\rho_1\|_{L^2}$ provided that $s\geq\frac{\alpha}{2}$, which is automatically satisfied if $s=2-\alpha$, $0<\alpha<1$ or $s\geq1$, $\alpha\leq2$. It suffices to bound $\|\Lambda_1^{s-\frac{\alpha}{2}}(u\cdot\nabla\rho_0)\|_{L^2}$ and $\|\Lambda_2^{s-\frac{\alpha}{2}}(u\cdot\nabla\rho_0)\|_{L^2}$. By Lemma \ref{lem2.1} again, for any $\alpha\in[0,2]$, $s\geq1$, we have
$$\begin{aligned}
\|\Lambda_1^{s-\frac{\alpha}{2}}(u\cdot\nabla\rho_0)\|_{L^2}=&\|\nabla\rho_0\cdot\Lambda_1^{s-\frac{\alpha}{2}}u\|_{L^2}
\leq\|\nabla\rho_0\|_{L^\infty_{x_2}}\|\Lambda_1^{s-\frac{\alpha}{2}}u\|_{L^2},
\end{aligned}$$
$$\begin{aligned}
\|\Lambda_2^{s-\frac{\alpha}{2}}(u\cdot\nabla\rho_0)\|_{L^2}
=&\left(\int_{\mathbb{R}}\|\Lambda_2^{s-\frac{\alpha}{2}}(u\cdot\nabla\rho_0)\|_{L^2_{x_2}}^2dx_1\right)^\frac{1}{2}\\
\lesssim&\left(\int_{\mathbb{R}}\|\nabla\rho_0\|^2_{L^\infty_{x_2}}\|\Lambda_2^{s-\frac{\alpha}{2}}u\|_{L^2}^2dx_1+\int_{\mathbb{R}}\|\nabla\tilde{\Lambda}^{s-\frac{\alpha}{2}}\rho_0\|_{L^p_{x_2}}^2\|u\|^2_{L^{\frac{2p}{p-2}}_{x_2}}dx_1\right)^\frac{1}{2}\\
\lesssim&\left(\|\nabla\rho_0\|_{L^\infty_{x_2}}+\|\nabla\tilde{\Lambda}^{s-\frac{\alpha}{2}}\rho_0\|_{L^p_{x_2}}\right)\left(\int_{\mathbb{R}}\|\Lambda_2^{s-\frac{\alpha}{2}}u\|_{L^2}^2dx_1+\int_{\mathbb{R}}\|u\|^2_{L^{\frac{2p}{p-2}}_{x_2}}dx_1\right)^\frac{1}{2}.
\end{aligned}$$
Using interpolation and Young's inequality, it holds
$$\|\Lambda_1^{s-\frac{\alpha}{2}}u\|_{L^2}\lesssim\|\Lambda_1^su\|_{L^2}^{1-\frac{\alpha}{2s}}\|u\|_{L^2}^\frac{\alpha}{2s}\lesssim\|u\|_{H^s},$$
$$\|\Lambda_2^{s-\frac{\alpha}{2}}u\|_{L^2_{x_2}}\lesssim\|\Lambda_2^{s}u\|_{L^2_{x_2}}^{1-\frac{\alpha}{2s}}\|u\|_{L^2_{x_2}}^\frac{\alpha}{2s}\lesssim\|u\|_{L^2_{x_2}}+\|\Lambda_2^su\|_{L^2_{x_2}},$$
$$\|u\|_{L^\frac{2p}{p-2}_{x_2}}\lesssim\|u\|_{L^2_{x_2}}+\|u\|_{L^\infty_{x_2}}\lesssim\|u\|_{L^2_{x_2}}+\|\Lambda_2^su\|_{L^2_{x_2}}.$$
Hence it follows that
$$\|\Lambda_1^{s-\frac{\alpha}{2}}(u\cdot\nabla\rho_0)\|_{L^2}+\|\Lambda_2^{s-\frac{\alpha}{2}}(u\cdot\nabla\rho_0)\|_{L^2}\lesssim\left(\|\nabla\rho_0\|_{L^\infty_{x_2}}+\|\nabla\tilde{\Lambda}^{s-\frac{\alpha}{2}}\rho_0\|_{L^p_{x_2}}\right)\|u\|_{H^s},$$
and we obtain (\ref{2.8}) by H\"older inequality.
Suppose now $\alpha\in(0,1)$, $s=2-\alpha$. For convenience, denote $\tilde{\rho}=(\Lambda^s_1+\Lambda_2^s)^{-1}\Lambda^s\rho_1$.
$$\begin{aligned}
\left|\int_{\mathbb{R}^2}\Lambda^s\rho_1\Lambda^s(u\cdot\nabla\rho_0)dx\right|=&\left|\int_{\mathbb{R}^2}\Lambda^{s}\tilde{\rho}(\Lambda_1^s+\Lambda_2^s)(u\cdot\nabla\rho_0)dx\right|\\
\lesssim&\int_{\mathbb{R}}\left\|\Lambda^{s}\tilde{\rho}\right\|_{L^\frac{2}{1-\alpha}_{x_2}}\left(\|\Lambda_1^s(u\cdot\nabla\rho_0)\|_{L^\frac{2}{1+\alpha}_{x_2}}+\|\Lambda_2^s(u\cdot\nabla\rho_0)\|_{L^\frac{2}{1+\alpha}_{x_2}}\right)dx_1.
\end{aligned}$$
Sobolev embedding gives that
$$\left\|\Lambda^{s}\tilde{\rho}\right\|_{L^\frac{2}{1-\alpha}_{x_2}}\lesssim\left\|\Lambda_2^{\frac{\alpha}{2}}\Lambda^s\tilde{\rho}\right\|_{L^2_{x_2}},$$
$$\begin{aligned}
\|\Lambda_1^s(u\cdot\nabla\rho_0)\|_{L^\frac{2}{1+\alpha}_{x_2}}=&\|\nabla\rho_0\cdot\Lambda_1^su\|_{L^\frac{2}{1+\alpha}_{x_2}}
\lesssim\|\nabla\rho_0\|_{L^\frac{1}{\alpha}_{x_2}}\|\Lambda_1^su\|_{L^\frac{2}{1-\alpha}_{x_2}}
\lesssim\|\nabla\rho_0\|_{L^\frac{1}{\alpha}_{x_2}}\|\Lambda_2^\frac{\alpha}{2}\Lambda_1^su\|_{L^2_{x_2}},
\end{aligned}$$
$$\begin{aligned}
\|\Lambda_2^s(u\cdot\nabla\rho_0)\|_{L^\frac{2}{1+\alpha}_{x_2}}\lesssim&\|\nabla\rho_0\|_{L^\frac{1}{\alpha}_{x_2}}\|\Lambda_2^su\|_{L^\frac{2}{1-\alpha}_{x_2}}+\|\nabla\tilde{\Lambda}^s\rho_0\|_{L^\frac{1}{\alpha}_{x_2}}\|u\|_{L^\frac{2}{1-\alpha}_{x_2}}\\
\lesssim&\|\nabla\rho_0\|_{L^\frac{1}{\alpha}_{x_2}}\|\Lambda_2^{s+\frac{\alpha}{2}}u\|_{L^2_{x_2}}+\|\nabla\tilde{\Lambda}^s\rho_0\|_{L^\frac{1}{\alpha}_{x_2}}\|\Lambda_2^\frac{\alpha}{2}u\|_{L_{x_2}^2}.
\end{aligned}$$
Therefore, 
$$\begin{aligned}
&\left|\int_{\mathbb{R}^2}\Lambda^s\rho_1\Lambda^s(u\cdot\nabla\rho_0)dx\right|\\
\lesssim&\int_{\mathbb{R}}\|\Lambda_2^\frac{\alpha}{2}\Lambda^s\tilde{\rho}\|_{L^2_{x_2}}\left(\|\nabla\rho_0\|_{L^\frac{1}{\alpha}_{x_2}}\|\Lambda_2^\frac{\alpha}{2}\Lambda_1^su\|_{L^2_{x_2}}+\|\nabla\rho_0\|_{L^\frac{1}{\alpha}_{x_2}}\|\Lambda_2^{s+\frac{\alpha}{2}}u\|_{L^2_{x_2}}+\|\nabla\tilde{\Lambda}^s\rho_0\|_{L^\frac{1}{\alpha}_{x_2}}\|\Lambda_2^\frac{\alpha}{2}u\|_{L_{x_2}^2}\right)dx_1\\
\lesssim&\|\Lambda_2^\frac{\alpha}{2}\Lambda^s\tilde{\rho}\|_{L^2}\left[\|\nabla\rho_0\|_{L^\frac{1}{\alpha}_{x_2}}\left(\|\Lambda_2^\frac{\alpha}{2}\Lambda_1^su\|_{L^2}+\|\Lambda_2^{s+\frac{\alpha}{2}}u\|_{L^2}\right)+\|\nabla\tilde{\Lambda}\rho_0\|_{L^\frac{1}{\alpha}_{x_2}}\|\Lambda_2^\frac{\alpha}{2}u\|_{L^2}\right].
\end{aligned}$$
Finally, we arrive at (\ref{2.7}) by the following inequalities
$$\|\Lambda_2^\frac{\alpha}{2}\Lambda^s\tilde{\rho}\|_{L^2}\leq\|\Lambda^{s+\frac{\alpha}{2}}\tilde{\rho}\|_{L^2}\simeq\|\Lambda^{s+\frac{\alpha}{2}}\rho_1\|_{L^2},$$
$$\|\Lambda_2^\frac{\alpha}{2}\Lambda_1^su\|_{L^2}+\|\Lambda_2^{s+\frac{\alpha}{2}}u\|_{L^2}\lesssim\|\Lambda^{s+\frac{\alpha}{2}}u\|_{L^2}\leq\|\Lambda^{s+\frac{\alpha}{2}}\rho_1\|_{L^2},$$
$$\|\Lambda_2^\frac{\alpha}{2}u\|_{L^2}\leq\|\Lambda^\frac{\alpha}{2}u\|_{L^2}\leq\|\Lambda^\frac{\alpha}{2}\rho_1\|_{L^2}.$$
\end{proof}
In the following we sketch a priori estimates that can be used to establish local well-posedness for (\ref{2.2}) in the small data case with $\alpha\in(0,1]$, $s=2-\alpha$ or $\alpha\in(1,2]$, $s\geq1$. A rigorous proof can be given with these estimates in a standard way of smooth mollifier approximations. We begin with the energy estimate of the solutions. From (\ref{2.2}), we have
$$\begin{aligned}
\frac{1}{2}\frac{d}{dt}\|\rho_1\|_{L^2}^2+\|\Lambda^\frac{\alpha}{2}\rho_1\|_{L^2}^2=-\int_{\mathbb{R}^2}\rho_1u\cdot\nabla\rho_0dx,
\end{aligned}$$
$$\begin{aligned}
\frac{1}{2}\frac{d}{dt}\|\Lambda^s\rho_1\|_{L^2}^2+\|\Lambda^{s+\frac{\alpha}{2}}\rho_1\|_{L^2}^2=-\int_{\mathbb{R}^2}\Lambda^s\rho_1\Lambda^s(u\cdot\nabla\rho_0)dx-\int_{\mathbb{R}^2}\Lambda^s\rho_1\Lambda^s(u\cdot\nabla\rho_1)dx.
\end{aligned}$$
By (\ref{2.4})(\ref{2.5})(\ref{2.6})(\ref{2.8}), we have for both cases
$$\begin{aligned}
&\frac{1}{2}\partial_t\|\rho_1\|_{H^s}^2+\|\Lambda^\frac{\alpha}{2}\rho_1\|_{L^2}^2+\|\Lambda^{s+\frac{\alpha}{2}}\rho_1\|_{L^2}^2\\
\lesssim&\|\rho_1\|_{L^2}^2\|\nabla\rho_0\|_{L^\infty}+\|\Lambda^s\rho_1\|_{L^2}\left(\|\Lambda^\frac{\alpha}{2}\rho_1\|_{L^2}^2+\|\Lambda^{s+\frac{\alpha}{2}}\rho_1\|_{L^2}^2\right)\\
&+\|\Lambda^{s+\frac{\alpha}{2}}\rho_1\|_{L^2}\|u\|_{H^s}\left(\|\nabla\rho_0\|_{L^\infty_{x_2}}+\|\nabla\tilde{\Lambda}^{s-\frac{\alpha}{2}}\rho_0\|_{L^p_{x_2}}\right).
\end{aligned}$$
Using Cauchy-Schwartz, it holds for $\delta<1$ and constant $C_\delta$ that
$$\begin{aligned}
&\|\Lambda^{s+\frac{\alpha}{2}}\rho_1\|_{L^2}\|u\|_{H^s}\left(\|\nabla\rho_0\|_{L^\infty_{x_2}}+\|\nabla\tilde{\Lambda}^{s-\frac{\alpha}{2}}\rho_0\|_{L^p_{x_2}}\right)\\
\lesssim&\delta\|\Lambda^{s+\frac{\alpha}{2}}\rho_1\|_{L^2}^2+C_\delta\|\rho_1\|^2_{H^s}\left(\|\nabla\rho_0\|_{L^\infty_{x_2}}+\|\nabla\tilde{\Lambda}^{s-\frac{\alpha}{2}}\rho_0\|_{L^p_{x_2}}\right)^2,
\end{aligned}$$
and thus
\begin{equation}\begin{aligned}
&\frac{1}{2}\partial_t\|\rho_1\|_{H^s}^2+(1-C_1\|\Lambda^s\rho_1\|_{L^2})\|\Lambda^\frac{\alpha}{2}\rho_1\|_{L^2}^2+(1-\delta-C_1\|\Lambda^s\rho_1\|_{L^2})\|\Lambda^{s+\frac{\alpha}{2}}\rho_1\|_{L^2}^2\\
\lesssim&\|\rho_1\|_{H^s}^2\left[\|\nabla\rho_0\|_{L^\infty}+C_\delta\left(\|\nabla\rho_0\|_{L^\infty_{x_2}}+\|\nabla\tilde{\Lambda}^{s-\frac{\alpha}{2}}\rho_0\|_{L^p_{x_2}}\right)^2\right].
\end{aligned}\label{2.9}\end{equation}
Let $\epsilon_1:=\frac{1}{2C_1}$ so that if we choose $\delta<\frac{1}{2}$, then
$$\begin{aligned}
\partial_t\|\rho_1\|_{H^s}^2\lesssim\|\rho_1\|_{H^s}^2\left[\|\nabla\rho_0\|_{L^\infty}+C_\delta\left(\|\nabla\rho_0\|_{L^\infty_{x_2}}+\|\nabla\tilde{\Lambda}^{s-\frac{\alpha}{2}}\rho_0\|_{L^p_{x_2}}\right)^2\right],
\end{aligned}$$
as long as the following condition holds:
\begin{equation}
\|\Lambda^s\rho_1\|_{L^2}\leq\epsilon_1.
\label{2.10}\end{equation}
Note that since $\rho_0$ is the solution to the heat equation (\ref{2.1}), we have
$$\|\nabla\rho_0\|_{L^\infty}\leq\|\nabla f\|_{L^\infty},\quad\|\Lambda^{s+1-\frac{\alpha}{2}}\rho_0\|_{L^p}\leq\|\Lambda^{s+1-\frac{\alpha}{2}}f\|_{L^p}.$$
By Gronwall's inequality, it follows
\begin{equation}\|\rho_1(t)\|_{H^s}\leq\|g\|_{H^s}\exp\left\{\int_0^t\left[\|\nabla\rho_0\|_{L^\infty}+C_\delta\left(\|\nabla\rho_0\|_{L^\infty_{x_2}}+\|\nabla\tilde{\Lambda}^{s-\frac{\alpha}{2}}\rho_0\|_{L^p_{x_2}}\right)^2\right]ds\right\}.
\label{2.11}\end{equation}
Such inequality can be used to show local well-posedness for (\ref{2.2}) provided $\|g\|_{H^s}$ is small enough.
\section{Proof of Theorem 1.1}
Now we prove Theorem \ref{thm1.1} for $\alpha$ in different regions.\\
\textit{Proof of Theorem \ref{thm1.1}}. Recall that we have the energy estimate
\begin{equation}\begin{aligned}
&\frac{1}{2}\frac{d}{dt}\|\rho_1\|_{H^s}^2+\|\Lambda^\frac{\alpha}{2}\rho_1\|_{L^2}^2+\|\Lambda^{s+\frac{\alpha}{2}}\rho_1\|_{L^2}^2\\
=&-\int_{\mathbb{R}^2}\rho_1u\cdot\nabla\rho_0dx-\int_{\mathbb{R}^2}\Lambda^s\rho_1\Lambda^s(u\cdot\nabla\rho_0)dx-\int_{\mathbb{R}^2}\Lambda^s\rho_1\Lambda^s(u\cdot\nabla\rho_1)dx.
\end{aligned}\label{3.1}\end{equation}
(\romannumeral1) The case $0<\alpha<1$. Suppose in this part $s=2-\alpha$. By (\ref{2.3})(\ref{2.5})(\ref{2.7}), we have
$$\begin{aligned}
&\left|\int_{\mathbb{R}^2}\rho_1u\cdot\nabla\rho_0dx\right|+\left|\int_{\mathbb{R}^2}\Lambda^s\rho_1\Lambda^s(u\cdot\nabla\rho_0)dx\right|+\left|\int_{\mathbb{R}^2}\Lambda^s\rho_1\Lambda^s(u\cdot\nabla\rho_1)dx\right|\\
\lesssim&\left(\|\nabla\rho_0\|_{L^\frac{1}{\alpha}}+\|\nabla\tilde{\Lambda}^s\rho_0\|_{L^\frac{1}{\alpha}}+\|\Lambda^s\rho_1\|_{L^2}\right)\left(\|\Lambda^\frac{\alpha}{2}\rho_1\|_{L^2}+\|\Lambda^{s+\frac{\alpha}{2}}\rho_1\|_{L^2}\right)\, 
\end{aligned}$$
so that for some universal constant $C_2$
\begin{equation}\frac{1}{2}\frac{d}{dt}\|\rho_1\|_{H^s}^2+C_2\left(C_2^{-1}-\|\nabla\rho_0\|_{L^\frac{1}{\alpha}}-\|\nabla\tilde{\Lambda}^s\rho_0\|_{L^\frac{1}{\alpha}}-\|\Lambda^s\rho_1\|_{L^2}\right)\left(\|\Lambda^\frac{\alpha}{2}\rho_1\|_{L^2}^2+\|\Lambda^{s+\frac{\alpha}{2}}\rho_1\|_{L^2}^2\right)\leq0.\label{3.2}\end{equation}
Hence we obtain the global solution once $\|\nabla\rho_0\|_{L^\frac{1}{\alpha}}+\|\nabla\Lambda^s\rho_0\|_{L^\frac{1}{\alpha}}+\|\Lambda^s\rho_1\|_{L^2}$ is small enough at some $t_1>0$. In fact, since $\rho_0$ is the solution of the heat equation (\ref{2.1}), the decay property of the heat equation (see \cite{MIAO2008461}, Lemma 3.1)  shows that
$$\|\nabla\rho_0\|_{L^{\frac{1}{\alpha}}(\mathbb{R})}\lesssim t^{-\frac{1}{\alpha}\left(\frac{1}{q}-\alpha\right)}\|\nabla f\|_{L^q(\mathbb{R})},$$
$$\|\nabla\tilde{\Lambda}^{s}\rho_0\|_{L^{\frac{1}{\alpha}}(\mathbb{R})}\lesssim t^{-\frac{s}{\alpha}-\frac{1}{\alpha}\left(\frac{1}{q}-\alpha\right)}\|\nabla f\|_{L^q(\mathbb{R})}.$$
We can choose $t_1>0$ large enough so that $\|\nabla\rho_0\|_{L^\frac{1}{\alpha}}+\|\nabla\Lambda^s\rho_0\|_{L^\frac{1}{\alpha}}\leq\epsilon_2:=\frac{1}{2C_2}$ for all $t\geq t_1$, then it suffices to require $\|\rho_1\|_{H^s}\leq\epsilon_2$  at $t=t_1$. To this end, we consider the interval $I:=\left\{t\in(0,t_1]\mid\|\rho_1(\tau)\|_{H^s}\leq\frac{1}{2}\min\left\{\epsilon_1,\epsilon_2\right\},\;\forall \tau\in[0,t]\right\}$. It follows from the local well-posedness theory for (\ref{2.2}) that $I$ is a nonempty closed subset of $(0,t_1]$ provided $\|g\|_{H^s}$ is small enough. We claim that $I$ is open in $(0,t_1]$, and thus $I=(0,t_1]$. Suppose $s_0\in I$, then by (\ref{2.11}) (view $\rho_1(s_0)$ as the initial data and propagate the control for $\|\rho_1\|_{H^s}$ to $t>s_0$) there exists $s_1>s_0$ such that $\|\rho_1(t)\|_{H^s}\leq\min\left\{\epsilon_1,\epsilon_2\right\}$ for all $t\in[0,s_1]$. Note that condition (\ref{2.10}) is satisfied on $[0,s_1]$. Now we choose $\epsilon$ small enough such that 
$$\epsilon\exp\left\{\int_0^{t_1}\left[\|\nabla\rho_0\|_{L^\infty}+C_\delta\left(\|\nabla\rho_0\|_{L^\infty_{x_2}}+\|\nabla\tilde{\Lambda}^{s-\frac{\alpha}{2}}\rho_0\|_{L^p_{x_2}}\right)^2\right]ds\right\}\leq\frac{1}{2}\min\left\{\epsilon_1,\epsilon_2\right\}.$$
It then follows from (\ref{2.11}) that $\|\rho_1(\tau)\|_{H^s}\leq\frac{1}{2}\min\left\{\epsilon_1,\epsilon_2\right\}$ for all $\tau\in[0,s_1]$. Consequently $s_1\in I$ and $(s_0,s_1]\subset I$, so $I$ is open. Now we have $\|\rho_1(t)\|_{H^s}\leq\frac{1}{2}\min\left\{\epsilon_1,\epsilon_2\right\}$ for all $t\in[0,t_1]$ and $\frac{d}{dt}\|\rho_1\|_{H^s}^2\leq0$ as long as $t\geq t_1$ and $\|\rho_1\|_{H^s}\leq\epsilon_2$. Therefore, using a standard continuity argument, we obtain the global solution of (\ref{2.2}).\\
(\romannumeral2) The case $1\leq\alpha<2$. Suppose now $s\geq1$ and $f\in L^q(\mathbb{R})$ with $q\in\left[1.\frac{1}{\alpha-1}\right)\cap[1,p]$. By the decay property of the heat equations, it holds
$$\|\nabla\rho_0\|_{L^\infty}\lesssim t^{-\frac{1}{\alpha}-\frac{1}{\alpha q}}\|f\|_{L^q},\quad \|\nabla\tilde{\Lambda}^{s-\frac{\alpha}{2}}\rho_0\|_{L^p}\lesssim t^{-\frac{1}{\alpha}\left(1+s-\frac{\alpha}{2}\right)-\frac{1}{\alpha}\left(\frac{1}{q}-\frac{1}{p}\right)}\|f\|_{L^q},$$
and 
$$\|\nabla\rho_0\|_{L^\infty}\leq\|\nabla f\|_{L^\infty},\quad \|\nabla\tilde{\Lambda}^{s-\frac{\alpha}{2}}\rho_0\|_{L^p}\leq\|\nabla\tilde{\Lambda}^{s-\frac{\alpha}{2}}f\|_{L^p}.$$ 
Noticing that $\frac{1}{\alpha}\left(1+\frac{1}{q}\right)>1$ and $\frac{1}{\alpha}\left(1+s-\frac{\alpha}{2}\right)+\frac{1}{\alpha}\left(\frac{1}{q}-\frac{1}{p}\right)>\frac{1}{\alpha}\left(\frac{\alpha}{2}+s-
\frac{1}{p}\right)>\frac{1}{2}$, these inequalities yield $$\int_0^\infty\|\nabla\rho_0(s)\|_{L^\infty}ds\lesssim\|\nabla f\|_{L^\infty}+\|f\|_{L^q},$$
$$\int_0^\infty\|\nabla\tilde{\Lambda}^{s-\frac{\alpha}{2}}\rho_0(s)\|_{L^p}^2ds\lesssim\|\tilde{\Lambda}^{1+s-\frac{\alpha}{2}}f\|_{L^p}^2+\|f\|_{L^q}^2.$$
Now we choose $\epsilon$ small enough so that
$$\epsilon\int_0^\infty\left[\|\nabla\rho_0(s)\|_{L^\infty}+C_{\delta}\left(\|\nabla\rho_0(s)\|_{L^2}^2+\|\nabla\tilde{\Lambda}^{s-\frac{\alpha}{2}}\rho_0(s)\|_{L^p_{x_2}}\right)^2\right]ds\leq\frac{1}{2}\epsilon_1.$$
Then (\ref{2.11}) gives that $\|\rho_1\|_{H^s}\leq\frac{1}{2}\epsilon_1$ as long as $\|\Lambda^s\rho_1\|_{L^2}\leq\epsilon_1$. Therefore, we obtain the global solution by a standard continuity argument. \\ \qed
\bibliography{IPM}

\begin{thebibliography}{10}

\bibitem{bulut2024global}
Aynur Bulut and Hongjie Dong.
\newblock Global well-posedness for supercritical {SQG} with perturbations of
  radially symmetric data.
\newblock {\em International Mathematics Research Notices},
  2024(24):14655--14661, 2024.

\bibitem{castro2013breakdown}
{\'A}ngel Castro, Diego C{\'o}rdoba, Charles Fefferman, and Francisco Gancedo.
\newblock Breakdown of smoothness for the {Muskat} problem.
\newblock {\em Archive for Rational Mechanics and Analysis}, 208:805--909,
  2013.

\bibitem{castro2019global}
{\'A}ngel Castro, Diego C{\'o}rdoba, and Daniel Lear.
\newblock Global existence of quasi-stratified solutions for the confined {IPM}
  equation.
\newblock {\em Archive for Rational Mechanics and Analysis}, 232:437--471,
  2019.

\bibitem{cordoba2007contour}
Diego C{\'o}rdoba and Francisco Gancedo.
\newblock Contour dynamics of incompressible 3-{D} fluids in a porous medium
  with different densities.
\newblock {\em Communications in Mathematical Physics}, 273:445--471, 2007.

\bibitem{cordoba2007analytical}
Diego C{\'o}rdoba, Francisco Gancedo, and Rafael Orive.
\newblock Analytical behavior of two-dimensional incompressible flow in porous
  media.
\newblock {\em Journal of mathematical physics}, 48(6), 2007.

\bibitem{cordoba2011lack}
Diego C{\'o}rdoba~Gazolaz, Daniel Faraco~Hurtado, and Francisco
  Gancedo~Garc{\'\i}a.
\newblock Lack of uniqueness for weak solutions of the incompressible porous
  media equation.
\newblock {\em Archive for Rational Mechanics and Analysis, 200 (3), 725-746.},
  2011.

\bibitem{elgindi2017asymptotic}
Tarek~M Elgindi.
\newblock On the asymptotic stability of stationary solutions of the inviscid
  incompressible porous medium equation.
\newblock {\em Archive for Rational Mechanics and Analysis}, 225:573--599,
  2017.

\bibitem{friedlander2012singular}
Susan Friedlander, Francisco Gancedo, Weiran Sun, and Vlad Vicol.
\newblock On a singular incompressible porous media equation.
\newblock {\em Journal of Mathematical Physics}, 53(11), 2012.

\bibitem{kiselev2023small}
Alexander Kiselev and Yao Yao.
\newblock Small scale formations in the incompressible porous media equation.
\newblock {\em Archive for Rational Mechanics and Analysis}, 247(1):1, 2023.

\bibitem{MIAO2008461}
Changxing Miao, Baoquan Yuan, and Bo~Zhang.
\newblock Well-posedness of the cauchy problem for the fractional power
  dissipative equations.
\newblock {\em Nonlinear Analysis: Theory, Methods \& Applications},
  68(3):461--484, 2008.

\bibitem{nield2006convection}
Donald~A Nield, Adrian Bejan, et~al.
\newblock {\em Convection in porous media}, volume~3.
\newblock Springer, 2006.

\bibitem{szekelyhidi2012relaxation}
L{\'a}szl{\'o} Sz{\'e}kelyhidi~Jr.
\newblock Relaxation of the incompressible porous media equation.
\newblock In {\em Annales scientifiques de l'Ecole normale sup{\'e}rieure},
  volume~45, pages 491--509, 2012.

\bibitem{xue2009well}
Liutang Xue.
\newblock On the well-posedness of incompressible flow in porous media with
  supercritical diffusion.
\newblock {\em Applicable Analysis}, 88(4):547--561, 2009.

\bibitem{yuan2009global}
Baoquan Yuan and Jia Yuan.
\newblock Global well-posedness of incompressible flow in porous media with
  critical diffusion in {Besov} spaces.
\newblock {\em Journal of Differential Equations}, 246(11):4405--4422, 2009.

\end{thebibliography}
\end{document}